\def\E{{\mathbb{E}}}
\def\P{{\mathbb{P}}}
\def\R{{\mathbb{R}}}
\def\Bcal{{\mathcal{B}}}
\def\Ccal{{\mathcal{C}}}
\def\Ecal{{\mathcal{E}}}
\def\Gcal{{\mathcal{G}}}
\def\Rcal{{\mathcal{R}}}
\def\Tcal{{\mathcal{T}}}
\def\Vcal{{\mathcal{V}}}
\newcommand{\indicator}[1]{\mathbbm{1}_{\{#1\}}}
\newtheorem{theorem}{Theorem}
\newtheorem*{proposition}{Proposition}
\newtheorem{lemma}{Lemma}
\theoremstyle{definition}
\newtheorem*{definition}{Definition}
\newtheorem{remark}{Remark}
\title{Coupling limit order books and branching random walks}
\author{Florian Simatos}
\thanks{This work was done while the author was affiliated with CWI and sponsored by an NWO-VIDI grant.}
\address{Eindhoven University of Technology}
\email{f.simatos@tue.nl}
\date{\today}
\begin{document}

\maketitle

\begin{abstract}
	We consider a model for a one-sided limit order book proposed by Lakner et al.~\cite{Lakner:0}. We show that it can be coupled with a branching random walk and use this coupling to answer a non-trivial question about the long-term behavior of the price. The coupling relies on a classical idea of enriching the state-space by artificially creating a filiation, in this context between orders of the book, that we believe has the potential of being useful for a broader class of models.
\end{abstract}

\section{Introduction}

\textbf{Limit order books.} A limit order book is a financial trading mechanism that keeps track of orders made by traders, and allows them to execute trades in the future. Typically, a trader places an order to buy a security at a certain level $x$. If the price of the security is larger than $x$ when the order is placed, then the order is kept in the book and may be fulfilled later in the future, as the price of the security fluctuates and falls below $x$. Similarly, traders may place sell orders, which gives rise to two-sided order books. Because of the importance of limit order books in financial markets, there has been a lot of research on these models, see for instance the survey by Gould et al.~\cite{Gould12:0}.

There are many variants of information of the book which traders have access to. For instance, traders may only have access to the current so-called bid and ask prices, that correspond to the lowest sell order and the highest buy order. In this case, traders have an incentive to place orders in the vicinity of these prices. More generally, the dynamic of a limit order book is intricate because its current state influences its future evolution. Stochastic models capturing this dynamic have for instance been proposed in Cont et al.~\cite{Cont10:0}, Lakner et al.~\cite{Lakner:0} and Yudovina~\cite{Yudovina:0}. In the present paper we study the one-sided limit order book model of Lakner et al.~\cite{Lakner:0}, and our goal is to show how some properties of this model can be efficiently studied thanks to a coupling with a branching random walk.

From a high-level perspective, the coupling we introduce adds a new dimension to the initial limit order book model by creating a filiation between the orders. Such ideas have been extremely successful in queueing theory, see for instance Kendall~\cite{Kendall51:0}, and we believe they can also be useful beyond the context of the model proposed here. For instance, the model proposed by Yudovina~\cite{Yudovina:0} is also amenable to a tree representation, but the corresponding dynamic on trees is more challenging to analyze than the one here.
\\

\textbf{Branching random walks.} The Galton Watson process is the simplest model of a branching process. It represents the size of a population that evolves in discrete time, where at every time step each individual dies and is replaced by a random number of offspring, see for instance Athreya and Ney~\cite{Athreya04:0} for more details. A branching random walk is an extension of a Galton Watson process that adds a spatial component to the model. In addition to the genealogical structure given by the Galton Watson process, each individual has some location, say on the real line $\R$, that is given by a random displacement of her parent's location. Branching random walks can therefore be represented by trees with labels on the edges: the structure of the tree represents the genealogy of the underlying Galton Watson process, and the labels on the edges represent the displacement of the child with respect to her parent's location. In this paper we will consider the simplest model of branching random walks, where labels on the edges are i.i.d., and will use results by Biggins~\cite{Biggins76:0} and Biggins et al.~\cite{Biggins91:0} to study the limit order book model.
\\

\textbf{Acknowledgements} The author would like Josh Reed for introducing him to the limit order book model of the present paper, Elie A\"id\'ekon for interesting discussions on branching random walks and an anonymous referee whose careful reading led to substantial improvements in the proof of Theorem~\ref{thm:price}.

\section{One-sided limit order book model}

\subsection{Model} Let us define a \emph{book} as a finite point measure on $\R$ and an \emph{order} as a point of a book. Let $\Bcal$ be the set of books. For a book $\beta \in \Bcal$, let $|\beta|$ be its mass (i.e., the number of orders it contains) and $\pi(\beta)$, which we call the \emph{price} of the book, be the right endpoint of its support, i.e., the location of the rightmost order:
\[ \pi(\beta) = \max \left\{ x \in \R: \beta(\{x\}) > 0 \right\}, \ \beta \in \Bcal. \]

When the book is empty, i.e., $|\beta| = 0$, the value of the price is inconsequent for the purposes of this paper, say for instance $\pi(\beta) = 0$. Fix $p \in [0,1]$ and a real-valued random variable $X$. We are interested in the $\Bcal$-valued Markov chain $(B_n, n \geq 0)$ with the following dynamic. 

If the book is empty, the process goes to state $\delta_0$ in the next time step, where here and in the sequel $\delta_x$ stands for the Dirac mass at $x \in \R$. If the book is not empty, a coin with bias $p$ is flipped. If heads (with probability $p$), an order is added to the current book at a random distance distributed according to $X$ from the current price of the book, independently from everything else. If tails (thus, with probability $1-p$), an order sitting at the current price is removed. Formally, the Markov chain $(B_n)$ has the following dynamic: for any $\beta \in \Bcal$ and any measurable function $f: \Bcal \to [0,\infty)$,
\begin{multline} \label{eq:transition}
	\E\left[f(B_{n+1}) \mid B_n = \beta\right] = f(\delta_0) \indicator{|\beta| = 0} + p \E \left[f(\beta + \delta_{\pi(\beta) + X}) \right] \indicator{|\beta| > 0}\\
	+ (1-p) f(\beta - \delta_{\pi(\beta)}) \indicator{|\beta| > 0}.
\end{multline}

As explained in the introduction, this model (in continuous-time, and with a different boundary condition) has been proposed by Lakner et al.~\cite{Lakner:0} to model a one-sided limit order book. The interpretation of the model is as follows: $B_n$ represents the state of a one-sided limit order book with only buy orders. In each time step, either a trader places a new buy order (with probability $p$), or a trader places a market order (to sell the security, with probability $1-p$). In the latter case, the trader sells the security at the highest available buy order, thus removing one order sitting at the price. From this perspective, the behavior of the price process $(\pi(B_n), n \geq 0)$ is of primary interest. In this paper, we show how one can answer questions related to the price process by coupling $(B_n)$ with a branching random walk. Our coupling can be used to answer more elaborate questions on this particular model, and we believe that it also has the potential to be applied to other models.

\begin{remark}
	In order to recover the model of Lakner et al.~\cite{Lakner:0} one needs to apply an exponential transformation to $(B_n)$, i.e., to consider the process $\widetilde B_n = \sum_{x \in B_n} \delta_{e^x}$. This transformation makes the orders live on $(0,\infty)$, in which case the interpretation of $\pi$ as a price is reasonable. It also makes the price increase in a multiplicative rather than linear fashion, which is a common behavior in mathematical finance (e.g., geometric Brownian motion). We prefer to state the model on the line with linear displacement because of the analogy with branching random walks.
\end{remark}

\subsection{Price dynamic} The behavior of the price is asymmetric due to the system's dynamic. On the one hand, the price increases when an order is added to the right of the current price, and so an increase of the price is distributed according to $X$ given that $X > 0$. On the other hand, the price decreases when an order is removed from the book, in which case the decrease of the price depends on the distance between the price and the second rightmost order. In particular, orders to the left of the price act as a barrier that slow down the price as it wants to drift downward.

Thus, although $\E X < 0$ seems at first a natural condition for the price to drift to $-\infty$, it seems plausible that if $p$ is sufficiently close to $1$, there will be so many orders sitting to the left of the price that they will eventually make the price drift to $+\infty$. This intuition turns out to be correct as Theorem~\ref{thm:price} below shows.

This kind of behavior is strongly reminiscent of the behavior of extremal particles in branching random walks. There, although a typical particle drifts to $-\infty$ when $\E X < 0$, one may still observe atypical trajectories due to the exponential explosion in the number of particles, see the classical references by Hammersley~\cite{Hammersley74:0}, Kingman~\cite{Kingman75:1} and Biggins~\cite{Biggins76:0}. This analogy has actually been our initial motivation to investigate the relation between $(B_n)$ and branching random walks. And indeed, we will show in Theorem~\ref{thm:coupling} that $B_n$ can be realized as some functional of a branching random walk, and this connection will make the proof of Theorem~\ref{thm:price} quite intuitive.

\begin{theorem} \label{thm:price}
	Assume that $p > 1/2$ and that $\E X$ exists in $(-\infty, \infty)$.

	If $\E X > 0$, then $\pi(B_n) \to +\infty$ almost surely.

	Else, assume in addition to $p > 1/2$ that $\E X < 0$ and that $\P(X > 0) > 0$, and let $a = \inf_{\theta \geq 0} \E(e^{\theta X}) \in (0,1]$. If $p > 1/(1+a)$, then $\pi(B_n) \to +\infty$ almost surely, while if $p < 1/(1+a)$ then $\pi(B_n) \to -\infty$ almost surely.
\end{theorem}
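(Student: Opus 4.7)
The plan is to leverage the coupling of Theorem~\ref{thm:coupling}, which realizes the book process $(B_n)$ as a functional of a branching random walk $\Tcal$ with geometric offspring distribution of parameter $1-p$ (hence mean $m = p/(1-p) > 1$) and i.i.d.\ displacements distributed as $X$. In this coupling each order corresponds to a vertex of $\Tcal$, the book at time $n$ to the alive vertices, and $\pi(B_n)$ to the position of the rightmost alive vertex. Since $p > 1/2$ the Galton--Watson skeleton is supercritical, and after at most finitely many restarts from the empty state, the process almost surely enters a surviving epoch on which $\Tcal$ is infinite; the analysis is carried out on this event.

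The key analytic input will be the Hammersley--Kingman--Biggins theorem~\cite{Biggins76:0}: on survival, the rightmost particle $M_n^\Tcal$ of the $n$th generation of $\Tcal$ satisfies $M_n^\Tcal/n \to \gamma^*$ almost surely, where $\gamma^*$ is characterized by the Legendre transform of $\theta \mapsto \log \E(e^{\theta X})$. A one-line computation evaluating this at $\gamma = 0$ yields the dichotomy $\gamma^* > 0 \iff ma > 1 \iff p > 1/(1+a)$, and $\gamma^* < 0 \iff p < 1/(1+a)$. When $\E X > 0$, the infimum defining $a$ is attained at $\theta = 0$, so $a = 1$ and the condition $p > 1/(1+a)$ becomes $p > 1/2$, already assumed; the case $\E X > 0$ is thereby subsumed in the supercritical regime treated next.

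In the regime $p > 1/(1+a)$, Biggins' theorem gives $M_n^\Tcal \to +\infty$ at linear rate $\gamma^* > 0$, and the plan is to transfer this to $\pi(B_n)$ via the coupling. The size $|B_n|$ is a nearest-neighbour random walk on $\mathbb{Z}_{\geq 0}$ reflected at $0$ with drift $2p-1 > 0$, so it tends to $+\infty$ and forces the explored portion of $\Tcal$ to grow linearly in depth; a spine argument along a sequence of record-breaking displacements in $\Tcal$ will then produce, at each large $n$, an alive vertex in generation $\asymp n$ with position $(1+o(1))\gamma^* n$, yielding $\pi(B_n) \to +\infty$.

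The regime $p < 1/(1+a)$ is where the main difficulty lies. Here $\gamma^* < 0$: particles in deep generations of $\Tcal$ have positions tending to $-\infty$, but shallow high-positioned vertices could \emph{a priori} remain alive forever and keep $\pi(B_n)$ bounded. Fix $y \in \R$; an exponential tilt combined with $ma < 1$ gives $\E\#\{v \in \Tcal : \text{pos}(v) > y\} < \infty$, so only finitely many vertices of $\Tcal$ ever have position above $y$. Hence $Z_n := \#\{v \in B_n : \text{pos}(v) > y\}$ has finitely many upward jumps almost surely and converges to a limit $Z_\infty$. To show $Z_\infty = 0$ I would argue by contradiction: if $Z_\infty \geq 1$, pigeonhole among the finitely many vertices of $\Tcal$ above $y$ (using that removed orders do not return) produces a vertex $v$ above $y$ alive forever; since being at the price at infinitely many times would force removal through a geometric tails trial, $v$ is at the price only finitely often, so eventually some strictly higher alive vertex exists; pigeonhole again yields a strictly higher vertex alive forever, and iterating produces an infinite strictly increasing sequence of distinct positions in a finite set, a contradiction. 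Therefore $\pi(B_n) < y$ eventually, and since $y$ is arbitrary, $\pi(B_n) \to -\infty$ almost surely.
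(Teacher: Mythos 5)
Your treatment of the regime $p<1/(1+a)$ is essentially correct and takes a genuinely different route from the paper: the paper invokes Biggins' theorem for the rightmost particle ($M_n\to-\infty$ on survival) and notes that the price eventually sits at nodes of arbitrarily large depth, whereas you use a first-moment (Chernoff/many-to-one) bound, valid since $ma<1$ forces $\E(e^{\theta X})<\infty$ for some $\theta>0$, to show that only finitely many vertices of the entire tree ever lie above a fixed level $y$. This is more elementary, and your contradiction can be streamlined: if some green vertex above $y$ persisted forever, the price node would lie above $y$ for all large $n$, yet each of the finitely many vertices above $y$ can be the price for at most finitely many steps (each visit either turns one of its finitely many children green or removes it), so there is no need for the iterated pigeonhole producing an infinite increasing sequence.

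The regime $p>1/(1+a)$ (into which you correctly fold $\E X>0$) is where your proposal has a genuine gap. That $M_n\sim\gamma^* n$ with $\gamma^*>0$ says the branching random walk contains high particles; it does not say the \emph{book} does. A vertex becomes green only if the exploration reaches it, and the exploration never releases the white children of a green node that is no longer the price; in particular a node whose ancestral path dips below the label of a permanently green ancestor stays white forever. So the particles realizing $M_n$ may never enter the book, and ``an alive vertex in generation $\asymp n$ at position $(1+o(1))\gamma^* n$'' is precisely what must be proved rather than a consequence of Biggins' theorem; the drift of $|B_n|$ and ``record-breaking displacements'' do not supply it. The missing structural ingredient is that an order placed at level $\ell$ is never removed if and only if the subtree rooted at it, killed below $\ell$, is infinite; under $p>1/(1+a)$ this barrier-survival probability is strictly positive by the theorem of Biggins, Lubachevsky, Shwartz and Weiss (this, not the speed $\gamma^*$, is where the condition on $p$ and $a$ enters), and one then still needs a regeneration argument --- restarting at successive permanently green vertices and showing via geometric trials that they occur at levels tending to $+\infty$, and that the conclusion holds almost surely and not merely with the positive probability of the first barrier surviving. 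None of this appears in your sketch, so the supercritical half of the theorem is not established.
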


\begin{remark} \label{rem:a}
	Let $\varphi(\theta) = \E(e^{\theta X})$ for $\theta \geq 0$ and assume that $\E X < 0$. We will use the following dichotomy: either $\varphi(\theta) = +\infty$ for every $\theta > 0$, in which case $a = 1$; or $\varphi(\theta) < +\infty$ for some $\theta > 0$, in which case $a < 1$ due to the fact that $\varphi'(0) = \E X < 0$ in this case.
\end{remark}

Note that the price process is recurrent if $p \leq 1/2$ since $(|B_n|, n \geq 0)$ is a random walk reflected at $0$, so the previous result gives a full picture of the price behavior (except for the boundary cases $\E X = 0$ and $p = 1/(1+a)$). It is interesting to observe that if $p > 1/2$, $\E X < 0$, $\P(X > 0) > 0$ and $X$ has a heavy right tail, in the sense that $\varphi(\theta) = +\infty$ for every $\theta > 0$ or equivalently, the random variable $\max(X, 0)$ has no finite exponential moment, then the price will always diverge to $+\infty$, irrespectively of the values of $p$ and $\E X$. Although the fact that exponential moments play a key role is clear from a branching process perspective, we find it more surprising from the perspective of the limit order book.

\section{Coupling with a branching random walk}

\subsection{Intuition} From the book process $(B_n)$, one can construct a genealogical structure by making an order $x$ a child of some other order $y$ if $x$ was added to the book at a time where $y$ corresponded to the price of the book, i.e., $x$ is added at a time $n$ where $y = \pi(B_n)$. Since there is each time a probability $p$ of adding an order to the book, it is intuitively clear that this construction will give each order (at most) a geometric number of offspring. By labeling the edge between $x$ and $y$ with the displacement $x-y$, which has distribution $X$, we end up with a Galton Watson tree with geometric offspring distribution and i.i.d.\ real-valued labels on the edges, i.e., a branching random walk. The idea of the coupling is to reverse this construction and to start from the branching random walk to build the book process $(B_n)$. To do so, we will essentially realize the process $(B_n)$ as the iteration of a deterministic tree operator $\Phi$ on a random tree, thus encoding all the randomness in the tree.

Nodes of the tree represent orders of the books, and in order to distinguish between orders that are currently in the book, orders that have been in the book and removed, and orders that have not been in the book so far (but may be later) we consider trees where nodes have one of three colors: green (orders currently in the book), red (orders removed from the book) and white (orders not added in the book so far). We also consider trees with real-valued labels on the edges: then, each node is also given a label by adding to the label of its parent the label on the edge between them, the root having some arbitrary real-valued label. The label of a node represents the position of the corresponding order in the book. Then, the green node with largest label, say $\gamma$, represents the order at the current price, and so we will run the following dynamic on trees:
\begin{itemize}
	\item if $\gamma$ has at least one white child, then its first white child becomes green;
	\item if $\gamma$ has no white child, then $\gamma$ becomes red;
	\item if the tree has no green node then we need to draw a new random tree.
\end{itemize}

\subsection{The coupling: notation and main result}

Let $\Tcal$ be the set of rooted trees where:
\begin{itemize}
	\item every edge has a real-valued label;
	\item every node has one of three colors, green, red or white;
	\item finitely many nodes are green or red, and the set of nodes that are either green or red is connected;
	\item the root is green or red.
\end{itemize}

We will need to compare the labels and colors of the nodes and edges of various trees. In that respect, it is convenient to consider $\Vcal$ the set of all possible nodes and $\Ecal$ the set of all possible edges, and to denote by $\Vcal(t) \subset \Vcal$ the set of nodes, $\Ecal(t) \subset \Ecal$ the set of edges and $\Gcal(t), \Rcal(t) \subset \Vcal(t)$ the set of green and red nodes, respectively, of a tree $t \in \Tcal$. Nodes inherit labels as explained above, i.e., the label of a node is obtained by adding to the label of its parent the label of the edge between them, and the root has any real-valued label. If $v \in \Vcal$ is a node and $e \in \Ecal$ is an edge, we denote by $\ell(v, t)$ and $\ell(e,t)$ the label of this node and edge in the tree $t \in \Tcal$, provided $v \in \Vcal(t)$ and $e \in \Ecal(t)$. We call genealogical structure of a tree $t \in \Tcal$ the tree obtained from $t$ when forgetting about labels and colors, and we say that $t$ is a subtree of $t'$ and write $t \subset t'$ if the genealogical structure of $t$ is a subtree of the genealogical structure of $t'$ (in the usual graph-theoretic sense) and $\ell(e, t) = \ell(e, t')$ for every $e \in \Ecal(t) \subset \Ecal(t')$. For $t \in \Tcal$ let $\Gamma(t) \in \Bcal$ be the point measure recording the labels of the green nodes of $t$:
\[ \Gamma(t) = \sum_{v \in \Gcal(t)} \delta_{\ell(v, t)}. \]

Let $\Tcal^* = \{t \in \Tcal: |\Gcal(t)| > 0\}$ be the set of trees with at least one green node. If $t \in \Tcal^*$ we denote by $\gamma(t)$ the green node with largest label and by $\omega(t)$ the number of white children of $\gamma(t)$. If there are several green nodes with maximal labels, we choose the last one (where in the sequel, nodes are ordered according to the lexicographical order). For $t \in \Tcal$ and $v \in \Vcal(t)$ we will more generally define $\omega(v,t)$ as the number of white children of $v$ in $t$, so that $\omega(t) = \omega(\gamma(t), t)$ for $t \in \Tcal^*$. The following operator will create the dynamic of $(B_n)$.
\begin{definition} [Operator $\Phi$]
	Let $\Phi: \Tcal \to \Tcal$ be the following operator: if $|\Gcal(t)| = 0$ then $\Phi(t) = t$, while if $t \in \Tcal^*$, then $\Phi$ changes the color of one node according to the following rule:
	\begin{itemize}
		\item if $\omega(t) > 0$, $\Phi$ transforms the first white child of $\gamma(t)$ into a green node;
		\item if $\omega(t) = 0$, then $\Phi$ transforms $\gamma(t)$ into a red node.
	\end{itemize}
	
	We also define $\Phi_n$ as the $n$th iterate of $\Phi$, defined by $\Phi_0$ being the identity map and $\Phi_{n+1} = \Phi \circ \Phi_n$ for $n \geq 0$. Finally, we define $\kappa(t) = \inf\{n \geq 0: |\Gcal(\Phi_n(t))| = 0 \}$ for $t \in \Tcal$, so that $\kappa(t) \in \{0,1,\ldots,\infty\}$ is the first time where iterating $\Phi$ on $t$ creates a tree with no green node.
\end{definition}

We can now state our main result.

\begin{theorem} \label{thm:coupling}
	Let $T$ be the following random tree:
	\begin{itemize}
		\item the genealogical structure of $T$ is a Galton Watson tree with geometric offspring distribution with parameter $p$;
		\item labels on the edges are i.i.d.\ with distribution $X$, independently from the genealogical structure, and the root has label $0$;
		\item all nodes are white, except for the root which is green.
	\end{itemize}
	
	Then $(\Gamma(\Phi_n(T)), 0 \leq n \leq \kappa(T))$ is equal in distribution to $(B_n, 0 \leq n \leq \tau)$ started at $B_0 = \delta_0$, where $\tau = \inf\{ n \geq 0: |B_n| = 0 \}$.
\end{theorem}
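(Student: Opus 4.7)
The strategy is to show that $(\Gamma(\Phi_n(T)))_{0 \le n \le \kappa(T)}$ is a Markov chain with transition~\eqref{eq:transition} started at $\delta_0$, by coupling the iteration of $\Phi$ with a lazy, step-by-step construction of $T$. The key tool is the memoryless property of the geometric$(p)$ distribution: if $N$ is geometric with parameter $p$, then $\P(N \ge j+1 \mid N \ge j) = p$ for each $j \ge 0$. This lets us generate $T$ by revealing, for each node $v$, its children one at a time: given $j$ children of $v$ have already been revealed, an independent Bernoulli$(p)$ trial decides whether $v$ has a $(j+1)$-th child, and if so this child is joined to $v$ by an edge with a fresh, i.i.d.\ $X$-distributed label, independent of everything else.

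I would run $\Phi$ on $T$ in tandem with this lazy construction, revealing children of a node only at the moment $\Phi$ needs them. The core invariant, to be proved by induction on $n$, is: at time $n$, for every revealed node $v$, the children of $v$ revealed so far form the prefix of its children in lexicographic order, and each of them is colored green or red (never white) in $\Phi_n(T)$. The inductive step is immediate from the two cases in the definition of $\Phi$: when $\omega(\gamma, \Phi_n(T)) > 0$ the algorithm reveals the next child of $\gamma = \gamma(\Phi_n(T))$ and immediately recolors it green; when $\omega(\gamma, \Phi_n(T)) = 0$ no new revelation is needed and $\gamma$ is recolored red. The invariant guarantees that, throughout the coupled construction, a ``white child of $\gamma$'' in the sense of $\Phi_n(T)$ corresponds exactly to an as-yet unrevealed child of $\gamma$ in $T$, and the ``first'' such white child is the next one to be revealed.

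Granted this invariant, the event $\{\omega(\gamma, \Phi_n(T)) > 0\}$ is the Bernoulli$(p)$ event ``$\gamma$ has at least one more child beyond those already revealed''. By the memoryless property, this trial is independent of the $\sigma$-algebra generated by the revealed portion of $T$, which contains $\sigma(\Phi_0(T), \ldots, \Phi_n(T))$. On this event, the new green node has label $\ell(\gamma, \Phi_n(T)) + X' = \pi(\Gamma(\Phi_n(T))) + X'$ with $X' \sim X$ independent of the past, which matches the ``heads'' branch of~\eqref{eq:transition}; in the complementary event, the order at $\pi(\Gamma(\Phi_n(T)))$ is removed, matching the ``tails'' branch. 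Combined with $\Gamma(\Phi_0(T)) = \delta_0$ (the only non-white node of $T$ is the root, labeled $0$) and the obvious identification of $\kappa(T)$ with $\tau$, this yields the claimed equality in distribution.

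The main obstacle I anticipate is formalizing the lazy-revelation viewpoint so that the Markov/independence statements are rigorous: precisely defining the filtration generated by the revealed portion of $T$, proving the invariant above by induction, and then checking that the Bernoulli$(p)$ trial and fresh $X'$ used at step $n+1$ are genuinely independent of $\sigma(\Phi_0(T), \ldots, \Phi_n(T))$. Once this bookkeeping is in place, the transition match and hence the theorem follow by a routine induction.
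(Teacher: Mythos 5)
Your overall strategy --- realize $T$ by a lazy, child-by-child revelation driven by the memoryless property of the geometric law, and prove by induction that at each step the revealed children of every explored node form a lexicographic prefix, all of them non-white --- is exactly the idea the paper formalizes. The paper's device for making this rigorous is the operator $\Upsilon$ that deletes white nodes: it studies $Y_n = \Upsilon(\Phi_n(T))$ (your ``revealed portion'') and proves a deterministic proposition stating that the event $\{\Upsilon(\Phi_k(t)) = y_k,\ 0 \le k \le n\}$ coincides with $\{y_n \subset t \text{ and } \eta(v,y_n) = \eta(v,t) \text{ for all } v \in \Rcal(y_n)\}$, which is precisely the rigorous form of your invariant; the geometric offspring law and the i.i.d.\ edge labels then yield the transition probabilities. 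So the two arguments are essentially the same in substance.

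There is, however, one concretely wrong assertion in your write-up: you claim the Bernoulli$(p)$ trial at step $n+1$ is independent of a $\sigma$-algebra ``which contains $\sigma(\Phi_0(T), \ldots, \Phi_n(T))$''. It cannot be: $\Phi_n(T)$ retains all the white nodes, so $\sigma(\Phi_0(T)) = \sigma(T)$ already determines the entire tree, in particular whether $\gamma(\Phi_n(T))$ has a further child; the inclusion you assert goes the wrong way. The independence your lazy construction actually delivers, and the one you need, is from the revealed portion $\sigma(\Upsilon(\Phi_k(T)),\ 0 \le k \le n)$, and the relevant point is that this $\sigma$-algebra contains $\sigma(\Gamma(\Phi_k(T)),\ 0 \le k \le n)$ --- the paper shows the two in fact coincide --- which is all that is required to identify the transition kernel of $(\Gamma(\Phi_n(T)))$. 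This distinction is exactly why the paper introduces $\Upsilon$ instead of conditioning on the trees $\Phi_k(T)$ themselves. With that correction, together with the bookkeeping you already flag (checking that the lazy construction has the law of $T$, and the identification $\kappa(T) = \tau$ via $|\Gamma(\Phi_n(T))| = |\Gcal(\Phi_n(T))|$), your argument goes through.
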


\subsection{Proof of Theorem~\ref{thm:price} based on Theorem~\ref{thm:coupling}}

Thanks to Theorem~\ref{thm:coupling}, we can write $B_n = \Gamma(\Phi_n(T))$ for $n \leq \kappa(T) = \tau$. It is not hard to show that $\kappa(T) = +\infty$ if and only if $T$ is infinite, and so we will call $\{ \tau = \kappa(T) = +\infty \}$ the event of non-extinction. Since we are in the supercritical case $p > 1/2$, this event occurs with positive probability.

\subsubsection{First case: $\E X < 0$, $\P(X > 0) > 0$ and $p < 1/(1+a)$.} We first consider the case where the price drifts to $-\infty$, i.e., we assume that $\E X < 0$, $\P(X > 0) > 0$ and $p < 1/(1+a)$ and we prove that $\pi(B_n) \to -\infty$. Since $p > 1/2$ by assumption, we have in particular $a < 1$ and so there must exist $\eta > 0$ such that $\E(e^{\eta X}) < +\infty$ (see Remark~\ref{rem:a} following Theorem~\ref{thm:price}). Moreover, since we are interested in the long-time behavior of the price which goes back to $0$ at time $\tau$ in the event $\{\tau < +\infty\}$, we work in the event of non-extinction.

Let $M_n$ be the rightmost point of the branching random walk $T$ at time $n$, i.e.,
\[ M_n = \max \left\{ \ell(v, T): v \in \Vcal(T) \text{ and } |v| = n \right\} \]
where $|v|$ is the distance from $v$ to the root. Under the assumptions made on $X$ and $p$, Theorem~$4$ in Biggins~\cite{Biggins76:0} shows that $M_n \to -\infty$ almost surely in the event of non-extinction. We now show that $\pi(B_n) \to -\infty$ when $M_n \to -\infty$, which will conclude the proof of this case.

Let $K \geq 0$ and $n_0$ such that $M_n \leq -K$ for any $n \geq n_0$, i.e., $\ell(v,T) \leq -K$ for every $v \in \Vcal(T)$ with $|v| \geq n_0$. Since there are only finite many nodes of $T$ at depth $<n_0$, we must have $|\gamma(\Phi_n(T))| \geq n_0$ for $n$ large enough and for those $n$, we have $\ell(\gamma(\Phi_n(T)), \Phi_n(T)) \leq -K$ by choice of $n_0$. This proves that $\pi(B_n) \to -\infty$.

\subsubsection{Second case: $\E X > 0$, or $\E X < 0$, $\P(X > 0) > 0$ and $p > 1/(1+a)$.} We now consider the case where the price drifts to $+\infty$, i.e.,  we assume that either $\E X > 0$, or $\E X < 0$, $\P(X > 0) > 0$ and $p > 1/(1+a)$ and we prove that $\pi(B_n) \to +\infty$. Before going into the technical details, let us give a high-level idea of the proof.

The key observation is that as long as an order is in the book, the behavior of the price does not depend on the state of the book to the left of this order. In particular, in order to compute the probability that the order sitting initially in the book at $0$ is never removed from the book, we may as well assume that all orders that are placed in $(-\infty, 0)$ are instantaneously removed, or killed.

In terms of the underlying tree $T$, removing all orders that are placed in $(-\infty, 0)$ amounts to removing all nodes $v$, together with all their descendants, with label $\ell(v, T) < 0$. We thus obtain a new tree $T'$, a subtree of the original tree $T$, which is a well-known object: this is precisely a branching random walk with a barrier at $0$. Under our assumptions on $X$ and $p$, the probability $\P(|T'| = +\infty)$ of $T'$ being infinite is strictly positive, as has been shown in Biggins et al.~\cite{Biggins91:0} (see Lemma~\ref{lemma:barrier} below).

Going back to the limit order book, $T'$ being infinite means exactly that the initial order sitting at $0$ will never be removed. Since this happens with positive probability, an order is eventually added to the book that is never removed. This order then constitutes a barrier under which the price never falls. Then, a renewal type argument shows that this phenomenon repeats itself: at regular intervals, an order is added to the book that constitutes a new barrier under which the price never falls. Eventually, this barrier moves up and forces to price to diverge to $+\infty$. Let us now formalize this heuristic argument.
\\

For $\ell \in \R$, we define the three operators $S_\ell, \Xi_\ell, \Xi: \Tcal \to \Tcal$ for $t \in \Tcal$ as follows:
\begin{itemize}
	\item $S_\ell(t)$ is obtained by adding $\ell$ to the label of the root;
	\item $\Xi_\ell(t)$ is obtained by removing all nodes $v \in \Vcal(t)$ with label $\ell(v, t) < \ell$, together with all their descendants;
	\item $\Xi(t) = \Xi_L(t)$ with $L$ the label of the root of $t$.
\end{itemize}

Note that since the label of each node is inherited from the label of its parent, adding $\ell$ to the label of the root has the effect of adding $\ell$ to the label of every node in the tree. Thus $S_\ell$ can be seen as shifting the whole tree in space by $\ell$. Moreover, $\Xi_\ell$ can be seen as a barrier operator: $\Xi_\ell(t)$ is the branching random walk $t$ where particles entering $(-\infty, \ell)$ are instantaneously killed; $\Xi(t)$ is thus the branching random walk with a barrier at the location of the root.

Further, we define $q = \P(|\Xi(T)| = +\infty)$ as the probability that the tree $\Xi(T)$ is infinite.

\begin{lemma}\label{lemma:barrier}
	$q > 0$.
\end{lemma}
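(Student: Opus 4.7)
The plan is to recognize $\Xi(T)$ as a standard object in the branching random walk literature and apply a known survival criterion. The key observation is that $\Xi(T)$ is nothing but the branching random walk $T$ with an absorbing barrier at the root's label (here $0$): any particle that falls strictly below $0$ is killed together with its entire line of descent. Consequently, $\{|\Xi(T)| = +\infty\}$ is exactly the survival event of this barrier process, which places us in the framework studied by Biggins, Lubachevsky, Shwartz and Weiss~\cite{Biggins91:0}.

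I would then invoke their survival criterion, which asserts that a branching random walk with an absorbing barrier at the starting position has positive probability of surviving as soon as $m \cdot \inf_{\theta \geq 0} \E(e^{\theta X}) > 1$, where $m$ is the mean offspring number of the underlying Galton Watson tree. In our setting $m = p/(1-p)$ (the mean of the geometric distribution with parameter $p$) and the infimum equals $a$ by definition, so the criterion rewrites as $pa > 1-p$, i.e., $p > 1/(1+a)$. This is exactly the hypothesis under which the lemma is invoked when $\E X < 0$.

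For the remaining case $\E X > 0$, I would first check that $a = 1$: by Jensen's inequality, $\varphi(\theta) := \E(e^{\theta X}) \geq e^{\theta \E X} > 1$ for every $\theta > 0$, whereas $\varphi(0) = 1$, so the infimum is attained at $\theta = 0$ with value $1$. The criterion then reduces to $p > 1/2$, which is part of the standing assumption of Theorem~\ref{thm:price}. In both cases one concludes $q > 0$. The main obstacle I anticipate is locating the precise statement in \cite{Biggins91:0} and verifying its technical assumptions (typically the existence of some $\theta > 0$ with $m \varphi(\theta) > 1$), which is automatic from $ma > 1$ combined with the continuity of $\varphi$ at $0$ in the $\E X > 0$ case, and from the fact that $a < 1$ forces some finite $\theta > 0$ with $\varphi(\theta) < 1$ in the $\E X < 0$ case (cf.\ Remark~\ref{rem:a}).
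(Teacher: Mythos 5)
Your overall strategy---identifying $\Xi(T)$ as a branching random walk with an absorbing barrier at the root's label and invoking Theorem~1 of Biggins et al.~\cite{Biggins91:0} under the criterion $\frac{p}{1-p}\, a > 1$, i.e.\ $p > 1/(1+a)$---is the same as the paper's for the cases where that theorem applies. The genuine gap lies in the ``technical assumptions'' you defer at the end: the relevant hypothesis of Biggins et al.\ is not the existence of some $\theta > 0$ with $m\varphi(\theta) > 1$ (which is indeed trivial), but the finiteness of an exponential moment, i.e.\ $\E(e^{\eta X}) < +\infty$ for some $\eta > 0$. When $X$ has a heavy right tail, $\varphi(\theta) = +\infty$ for every $\theta > 0$, so $a = 1$ and the lemma's hypothesis reads $p > 1/2$; this case is explicitly allowed by Theorem~\ref{thm:price} (and is singled out in the discussion following it as the most striking regime), yet Biggins et al.\ cannot be invoked for it, and your fallback ``$a < 1$ forces some finite $\theta > 0$ with $\varphi(\theta) < 1$'' does not cover it precisely because $a = 1$ there. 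For the same reason, $\varphi$ is not right-continuous at $0$ in that situation, so the continuity argument you sketch would also fail.

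The paper closes this gap with a truncation and coupling argument that your proposal is missing: replace every edge label exceeding $K$ by $K$, obtaining a tree $T_K$ whose labels are distributed as $X_K = \min(X,K)$; then $a_K = \inf_\theta \E(e^{\theta X_K}) < 1$ and $a_K \to a = 1$ as $K \to +\infty$, so $p > 1/(1+a_K)$ for $K$ large enough, Biggins et al.\ applies to $T_K$, and since every node of $T_K$ has a label no larger than that of the corresponding node of $T$, survival of the barrier process for $T_K$ implies survival for $T$. You would need to supply this (or an equivalent) argument. As a side remark, for the case $\E X > 0$ the paper avoids the citation altogether: any single line of descent is a random walk with positive drift, which stays nonnegative forever with positive probability; this is more robust than routing that case through the barrier criterion, which again runs into the heavy-tail issue.
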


\begin{proof}
	If $\E X > 0$, the result is clear since then any line of descent of $T$ is a random walk with positive drift. If $\E X < 0$, $\P(X > 0) > 0$ and $p > 1/(1+a)$, then the result is given by Theorem~$1$ in Biggins et al.~\cite{Biggins91:0} under the additional assumption that $\E(e^{\eta X}) < +\infty$ for some $\eta > 0$. We now show that the result remains valid when $\E(e^{\eta X}) = +\infty$ for every $\eta > 0$ by a truncation and coupling argument. Note that in this case we have $a =1$, see Remark~\ref{rem:a}.
	
	From $T$ construct the tree $T_K$ for $K \geq 0$ obtained by replacing each label on an edge greater than $K$ by $K$. Then $T_K$ is in distribution equal to $T$, but where labels on the edges are distributed according to $X_K = \min(X, K)$ instead of $X$. Let $a_K = \inf_\theta \E(e^{\theta X_K})$, so that $a_K < 1$ and $a_K \to a$ as $K \to +\infty$. Since $a = 1$, we thus have $p > 1/(1+a_K)$ for $K$ large enough, and since $\E(e^{X_K}) < +\infty$, we can invoke the first step of the proof to get that $\P(|\Xi_0(T_K)| = +\infty) > 0$ for $K$ large enough. Since nodes have smaller labels in $T_K$ than in $T$, this immediately implies $\P(|\Xi_0(T)| = +\infty) > 0$ as well which concludes the proof.
\end{proof}

For $t \in \Tcal$ and $v \in \Vcal(t)$, let $\theta(v,t)$ be the tree $t$ shifted at $v$, i.e., $\theta(v,t)$ is the subtree of $t$ rooted at $v$. The following observation will underly our reasoning, where we consider the process $(B_n)$ as an iteration of $\Phi$ on $T$ and in particular, we identify an order with the corresponding node in the tree. Imagine that a node $v \in \Vcal(T)$ is becoming the price at some time $n$, i.e., $n = \inf\{ k \geq 0:v = \gamma \circ \Phi_k(T) \}$, and that $v$ remains green up to time $n' \geq n$. Then for any $n \leq k < n'$, each time we iterate $\Phi$, a node is changed color. Each such node must belong to $\theta(v, T)$, and furthermore, if a node $v'$ becomes the price, it must belong to $\Xi \circ \theta(v,T)$. In particular, if $v$ becomes red, then all the nodes belonging to $\Xi \circ \theta(v,T)$ must also be red (note that this fact relies on our choice of $\gamma(t)$ being the largest node, in the lexicographic order, in case of several green nodes having the largest label).

With this in mind, we now proceed to proving that $\P(\pi(B_n) \to +\infty) = 1$: we break the proof in two steps.
\\

\noindent \textit{First step.} In the first step we prove that it is enough to prove that
\begin{equation} \label{eq:first-step}
	\P \left( \pi(B_n) \to +\infty \mid |\Xi(T)| = +\infty \right) = 1.
\end{equation}

Note that $\P(|\Xi(T)| = +\infty) > 0$ by the previous lemma so this conditioning is well-defined. Let a forest be a finite collection of trees in $\Tcal$. Consider a sequence of i.i.d.\ trees $(T_n)$ with common distribution $T$, and let $(\Ccal_n, 0 \leq n \leq \zeta)$ be the (possibly finite, if $\zeta < +\infty$) sequence of forests defined recursively as follows. At time $0$ we have $\Ccal_0 = \{T_0\}$.

Let $n \geq 0$: if $\Ccal_n = \emptyset$, then $\Ccal_{n+1} = \{T_{n+1}\}$. Else, we pick the tree, say $t_n$, in $\Ccal_n$ whose root has the largest label, say $\ell_n$, and we remove it from $\Ccal_n$ (ties are broken at random). If $|\Xi(t_n)| = +\infty$, we stop and define $\zeta = n$. Otherwise, we consider the (possibly empty, but finite) forest $t_n \setminus \Xi(t_n) = \{ t'_{n,1}, \ldots, t'_{n,K} \}$. We then obtain $\Ccal_{n+1}$ by the formula $\Ccal_{n+1} = (\Ccal_n \setminus \{t_n\}) \cup \{ t''_{n,1}, \ldots, t''_{n,K}\}$ where $t''_{n,k}$ is obtained from $t'_{n,k}$ by painting the root in green (note that, by construction, all other nodes are white).

It follows from this construction that the trees $(S_{-\ell_n}(t_n), 0 \leq n < \zeta)$ are identically distributed, with common distribution $T$ conditioned on $\{ |\Xi(T)| < +\infty \}$. Moreover, although these trees are in general not independent, due to the fact that $t_{n'}$ for $n' > n$ may be a subtree of $t_n$, the trees $(\Xi(S_{-\ell_n}(t_n)), 0 \leq n < \zeta)$ are independent. Indeed, consider for instance the tree $\Xi(S_{-\ell_1}(t_1))$ when $\zeta > 1$: then, although $t_1$ is a subtree of $t_0$, it depends on $\Xi(t_0)$ only through the label of its root. In particular, $S_{-\ell_1}(t_1)$ is independent from $\Xi(t_0)$. The  sequence $(\Xi(S_{-\ell_n}(t_n)), 0 \leq n < \zeta)$ is therefore an i.i.d.\ sequence of trees with common distribution $\Xi(T)$ conditioned on $\{ |\Xi(T)| < +\infty \}$.

It follows that $\zeta$ is a geometric random variable with parameter $q$, and since $q > 0$ by Lemma~\ref{lemma:barrier}, $\zeta$ is almost surely finite. We can therefore consider the tree $\Xi(S_{-\ell_\zeta}(t_\zeta))$, which, following similar arguments as before, is independent from the sequence of trees $(S_{-\ell_n}(t_n), 0 \leq n < \zeta)$, and is distributed like $\Xi(T)$ conditioned on $\{ |\Xi(T)| = +\infty \}$.

Furthermore, once we have the sequence of trees $(t_n, 0 \leq n \leq \zeta)$, we can iterate $\Phi$ on it: first, we iterate $\Phi$ on $t_0$ until all nodes of $\Xi(t_0)$ are red; then we proceed on iterating $\Phi$ on $t_1$ until all nodes of $\Xi(t_1)$ are red, etc\ldots\ Eventually, we will be iterating $\Phi$ on $t_\zeta$ and since $|\Xi(t_\zeta)| = +\infty$, there will always be green nodes in $\Xi(t_\zeta)$ in order to iterate $\Phi$. Moreover, it follows from Theorem~\ref{thm:coupling} that the process keeping track of the labels of all the green nodes either in the tree being explored or in the trees in the ``waiting room'' $\Ccal_n$ is precisely a version of the process $(B_n)$ started at $B_0 = \delta_0$.

Thus in order to study the long-term asymptotic behavior of the price, we may as well start right away with $T_0 = t_\zeta$, i.e., in order to prove $\P(\pi(B_n) \to +\infty) = 1$ it is enough to prove that $\P \left( \pi(B_n) \to +\infty \mid T = t_\zeta \right) = 1$. But if $\pi(B_n) \to +\infty$ almost surely in the event $T = t_\zeta$, shifting in space by $\ell_\zeta$ will not affect the result, and so it is indeed enough to prove~\eqref{eq:first-step}.
\\

\noindent \textit{Second step.} We now consider everything in the event $\{|\Xi(T)| = +\infty\}$ and prove~\eqref{eq:first-step} (recall that $\{|\Xi(T)| = +\infty\} = \{|\Xi_0(T)| = +\infty\}$ since by definition the root of $T$ has label $0$). Let $v_1, \cdots, v_{I_1}$ be the children of the root in $T$ ranked in lexicographic order. In $\{ |\Xi_0(T)| = +\infty \}$, we have $I_1 \geq 1$ and the set $\{ i: |\Xi_0 \circ \theta(v_i, T)| = +\infty \}$ is not empty. Let $i_1 = \min \{ i: |\Xi_0 \circ \theta(v_i, T)| = +\infty \}$ and $v^*_1 = v_{i_1}$, i.e., $v^*_1$ is the first child of the root that has an infinite line of descent that never enters $(-\infty, 0)$. Iterating this procedure, we can define a sequence $(v^*_n)$ such that $v^*_n$ is the first child of $v^*_{n-1}$ such that $|\Xi_0 \circ \theta(v^*_n, T)| = +\infty$. Note that the dynamic of $\Phi$ is such that eventually, every node $v^*_n$ becomes green. Moreover, $v^*_n$ stays green forever if and only if $|\Xi \circ \theta(v^*_n, T)| = +\infty$.

Let $u(k)$ be the index of the $k$th node of the sequence $(v^*_n)$ that stays green forever, i.e., $u(1) = \inf\{ k \geq 1: |\Xi \circ \theta(v^*_k, T)| = +\infty \}$ and for $n \geq 1$, $u(n+1) = +\infty$ if $u(n) = +\infty$ and otherwise,
\[ u(n+1) = \inf\{ k > u(n): |\Xi \circ \theta(v^*_k, T)| = +\infty \}. \]

Let $\ell^*_n = \ell(v^*_n, T)$ be the label of $v^*_n$: if $u(k)$ is finite, then $v^*_{u(k)}$ stays green forever and in particular, $\liminf_{n \to +\infty} \pi(B_n) \geq \ell^*_{u(k)}$. Thus if $u(n)$ is finite for each $n \geq 1$, we obtain, since the sequence $(\ell^*_{u(n)})$ is by construction non-decreasing and so admits a limit in $(-\infty,\infty]$,
\[ \liminf_{n \to +\infty} \pi(B_n) \geq \lim_{n \to +\infty} \ell^*_{u(n)}. \]

Conditionally on the event $\{ |\Xi_0(T)| = +\infty \text{ and } \forall n: u(n) < +\infty \}$, we see by shifting the tree successively at the nodes $v^*_{u(1)}$, $v^*_{u(2)}$, \ldots\ that the random variables $(\ell^*_{u(n+1)} - \ell^*_{u(n)}, n \geq 1)$ are i.i.d., non-negative and strictly positive with strictly positive probability, from which it follows that $\ell^*_{u(n)} \to +\infty$ as $n \to +\infty$. Thus to conclude the proof, it remains to show that for every $n \geq 1$,
\[ \P\left(u(n) < +\infty \mid |\Xi(T)| = +\infty \right) = 1. \]

By regeneration (i.e., by shifting at $v^*_{u(1)}$) it is enough to prove this result for $n = 1$, and so we have to prove that
\[ \P \left( E_1 \cap E_2 \cap \cdots \mid |\Xi(T)| = +\infty \right) = 0 \text{ where } E_n = \left\{ |\Xi \circ \theta(v^*_n, T)| < +\infty \right\}. \]

Let $w(1) = 1$ and $w(n+1) = \inf \big\{ k > w(n): \ell^*_k < \ell^*_{w(n)} \big\}$: then $w(2)$ is finite in $E_{w(1)}$ and more generally, $w(n+1)$ is finite in $E_{w(1)} \cap \cdots \cap E_{w(n)}$. In particular, defining
\[ r_N = \P \left( E_{w(1)} \cap E_{w(2)} \cap \cdots \cap E_{w(N)} \mid |\Xi(T)| = +\infty \right) \]
we have $\P \left( E_1 \cap E_2 \cap \cdots \mid |\Xi(T)| = +\infty \right) \leq r_N$. Moreover, by definition of $r_N$ it holds that
\[ r_N = r_{N-1} \left[ 1 - \P \left( |\Xi \circ \theta(v^*_{w(N)}, T)| = +\infty \mid E_{w(1)} \cap \cdots \cap E_{w(N-1)}, |\Xi(T)| = +\infty \right) \right]. \]

Let $\ell \geq 0$: conditionally on the event
\[ E_{w(1)} \cap \cdots \cap E_{w(N-1)} \cap \{ |\Xi(T)| = +\infty, \ell^*_{w(N)} = \ell \}, \]
$\theta(v^*_{w(N)}, T)$ is equal in distribution to $S_\ell(T)$ conditioned on $\{ |\Xi_0 \circ S_\ell(T)| = +\infty \}$ (note that this last event has probability at least $q > 0$). In particular,
\begin{multline*}
	\P \left( |\Xi \circ \theta(v^*_{w(N)}, T)| = +\infty \mid E_{w(1)} \cap \cdots \cap E_{w(N-1)}, |\Xi(T)| = +\infty, \ell^*_{w(N)} = \ell \right)\\
	= \P \left( |\Xi \circ S_\ell(T)| = +\infty \mid |\Xi_0 \circ S_\ell(T)| = +\infty \right)
\end{multline*}
and by shifting the trees by $-\ell$, we obtain
\[ \P \left( |\Xi \circ S_\ell(T)| = +\infty \mid |\Xi_0 \circ S_\ell(T)| = +\infty \right) = \P \left( |\Xi(T)| = +\infty \mid |\Xi_{-\ell}(T)| = +\infty \right). \]

If $T$ survives with a barrier at $0$, it certainly survives with a barrier at $-\ell$: in particular,
\[ \P \left( |\Xi(T)| = +\infty \mid |\Xi_{-\ell}(T)| = +\infty \right) = \frac{\P \left( |\Xi(T)| = +\infty \right)}{\P \left( |\Xi_{-\ell}(T)| = +\infty \right)} \geq q. \]

This shows that $r_N \leq r_{N-1} (1-q)$ and by induction, $r_N \leq (1-q)^{N-1}$. Since $q > 0$ and $r_N$ is an upper bound on $\P(u(1) = +\infty \mid |\Xi(T)| = +\infty)$, by letting $N \to +\infty$ we finally get the desired result $\P(u(1) = +\infty \mid |\Xi(T)| = +\infty) = 0$ which achieves the proof of Theorem~\ref{thm:price}.

\section{Proof of Theorem~\ref{thm:coupling}} 

\subsection{Study of an auxiliary tree-valued Markov chain} Theorem~\ref{thm:coupling} is very intuitive. Unfortunately, the rigorous proof involves quite a lot of formalism, since we need to go into the details of the tree dynamic induced by iterations of $\Phi$. In order to slightly reduce the notational burden, we will assume that $X$ is a discrete random variable; it is just a matter of formalism to extend the proof below to the general case. Let us introduce the following tree operators:
\begin{itemize}
	\item for $t \in \Tcal$, $\Upsilon(t)$ is the tree obtained from $t$ by deleting all white nodes;
	\item for $t \in \Tcal_\Omega = \{ t \in \Tcal: |\Gcal(t)| > 0 \text{ and } \omega(t) > 0 \}$, $\Omega(t)$ is the tree obtained from $t$ by turning the first white child of $\gamma(t)$ into a green node;
	\item for $t \in \Tcal^*$ and $\lambda \in \R$, $\Omega'(t, \lambda)$ is the tree obtained from $t$ by adding a green child to $\gamma(t)$ with label $\lambda$ on the corresponding edge;
	\item for $t \in \Tcal^*$, $\Psi(t)$ is the tree obtained from $t$ by turning $\gamma(t)$ into a red node.
\end{itemize}
It will also be convenient to introduce the following subsets of $\Tcal$:
\begin{itemize}
	\item $\Tcal_\Upsilon$ is the set of finite rooted trees with only green or red nodes;
	\item $\Tcal_\Psi = \{ t \in \Tcal: |\Gcal(t)| > 0 \text{ and } \omega(t) = 0 \}$;
	\item $\Tcal_0$ is the set of trees of which every node is white, except for the root which is green.
\end{itemize}

Note that $\Tcal = \{ t: |\Gcal(t)| = 0 \} \cup \Tcal_\Omega \cup \Tcal_\Psi$. Let $Y_n = \Upsilon(\Phi_n(T))$ with $T \in \Tcal_0$ as in Theorem~\ref{thm:coupling}. We denote by $t_0 = \Upsilon(T)$ the deterministic tree reduced to the root, which is green. The goal of this section is to prove that the process $(Y_n, n \geq 0)$ defines a Markov chain started at $t_0$ with the following dynamic: for any $n \geq 0$, any $y, y' \in \Tcal_\Upsilon$ (note that for any $t \in \Tcal_0$ and any $k \geq 0$, $\Upsilon(\Phi_k(t)) \in \Tcal_\Upsilon$) and any $x \in \R$,
\[ \P(Y_{n+1} = y' \mid Y_n = y) = \begin{cases}
	1 & \text{ if } |\Gcal(y)| = 0 \text{ and } y' = y,\\
	p \P(X=x) & \text{ if } |\Gcal(y)| > 0 \text{ and } y' = \Omega'(y, x),\\
	1-p & \text{ if } |\Gcal(y)| > 0 \text{ and } y' = \Psi(y),\\
	0 & \text{ otherwise.}
\end{cases} \]

To this end, we fix until the rest of this section $n \geq 0$, $x \in \R$ and $y_k \in \Tcal_\Upsilon$ for $k = 0, \ldots, n+1$, and we aim to prove that
\begin{multline} \label{eq:goal}
	\P\left( Y_k = y_k, 0 \leq k \leq n+1 \right) =\\
	\P\left( Y_k = y_k, 0 \leq k \leq n \right) \times \begin{cases}
		1 & \text{if } |\Gcal(y_n)| = 0 \text{ and } y_{n+1} = y_n,\\
		p \P(X=x) & \text{if } |\Gcal(y_n)| > 0 \text{ and } y_{n+1} = \Omega'(y_n, x),\\
		1-p & \text{if } |\Gcal(y_n)| > 0 \text{ and } y_{n+1} = \Psi(y_n)
	\end{cases}
\end{multline}
which will prove the Markov property of $(Y_n)$ with the prescribed dynamic. Let us study the dynamic of $(Y_n)$. According to the various definitions made, we have for any $t \in \Tcal$ that $\Phi(t)$ is equal to $t$ if $|\Gcal(t)| = 0$, to $\Omega(t)$ if $t \in \Tcal_\Omega$ and to $\Psi(t)$ if $t \in \Tcal_\Psi$, so that
\[ \Upsilon(\Phi(t)) = \begin{cases}
		\Upsilon(t) & \text{ if } |\Gcal(t)| = 0,\\
		\Upsilon(\Omega(t)) & \text{ if } t \in \Tcal_\Omega,\\
		\Upsilon(\Psi(t)) & \text{ if } t \in \Tcal_\Psi.
		\end{cases} \]

It is clear that if $t \in \Tcal_\Omega$, then $\Upsilon(\Omega(t)) = \Omega'(\Upsilon(t), \ell(t))$ with $\ell(t)$ the label on the edge between $\gamma(t)$ and its first white child, while if $t \in \Tcal_\Psi$, then $\Upsilon(\Psi(t)) = \Psi(\Upsilon(t))$. Thus the previous display can be rewritten as
\[ \Upsilon(\Phi(t)) = \begin{cases}
	\Upsilon(t) & \text{ if } |\Gcal(t)| = 0,\\
	\Omega'(\Upsilon(t), \ell(t)) & \text{ if } t \in \Tcal_\Omega,\\
	\Psi(\Upsilon(t)) & \text{ if } t \in \Tcal_\Psi
	\end{cases} \]
and since $\Phi_{n+1}(t) = \Phi(\Phi_n(t))$ we get
\[ \Upsilon(\Phi_{n+1}(t)) = \begin{cases}
	\Upsilon(\Phi_n(t)) & \text{ if } |\Gcal(\Phi_n(t))| = 0,\\
	\Omega'(\Upsilon(\Phi_n(t)), \ell(\Phi_n(t))) & \text{ if } \Phi_n(t) \in \Tcal_\Omega,\\
	\Psi(\Upsilon(\Phi_n(t))) & \text{ if } \Phi_n(t) \in \Tcal_\Psi.
	\end{cases} \]

Since $\Upsilon$ does not affect the colors of green nodes, we have $\Gcal(t) = \Gcal(\Upsilon(t))$ and in particular, $|\Gcal(\Phi_n(t))| = |\Gcal(\Upsilon(\Phi_n(t)))|$. Plugging in the definitions of $\Tcal_\Omega$ and $\Tcal_\Psi$, it follows that for any $n \geq 0$ and any $t \in \Tcal_0$, we have
\begin{multline} \label{eq:upsilon-phi}
	\Upsilon(\Phi_{n+1}(t)) =\\
	\begin{cases}
		\Upsilon(\Phi_n(t)) & \text{if } |\Gcal(\Upsilon(\Phi_n(t)))| = 0,\\
		\Omega'(\Upsilon(\Phi_n(t)), \ell(\Phi_n(t))) & \text{if } |\Gcal(\Upsilon(\Phi_n(t)))| > 0 \text{ and } \omega(\Phi_n(t)) > 0,\\
		\Psi(\Upsilon(\Phi_n(t))) & \text{if } |\Gcal(\Upsilon(\Phi_n(t)))| > 0 \text{ and } \omega(\Phi_n(t)) = 0.
	\end{cases}
\end{multline}

This last equation shows that $\Upsilon(\Phi_{n+1}(t))$ is almost entirely determined by $\Upsilon(\Phi_n(t))$, up to the knowledge (hidden by the action of $\Upsilon$) of whether $\gamma(\Phi_n(t))$ has at least one white child in $\Phi_n(t)$ or not, and the value $\ell(\Phi_n(t))$ of the corresponding edge. Further, since $Y_n = \Upsilon(\Phi_n(T))$,~\eqref{eq:upsilon-phi} leads to
\begin{multline*}
	\P\left( Y_k = y_k, 0 \leq k \leq n+1 \right) = \indicator{|\Gcal(y_n)| = 0, y_{n+1} = y_n} \P\left( Y_k = y_k, 0 \leq k \leq n \right)\\
	+ \indicator{|\Gcal(y_n)| > 0, y_{n+1} = \Omega'(y_n, x)} \P\left( \omega(\Phi_n(T)) > 0, \ell(\Phi_n(T)) = x, Y_k = y_k, 0 \leq k \leq n \right)\\
	+ \indicator{|\Gcal(y_n)| > 0, y_{n+1} = \Psi(y_n)} \P\left( \omega(\Phi_n(T)) = 0, Y_k = y_k, 0 \leq k \leq n \right)
\end{multline*}
and so to prove~\eqref{eq:goal}, we only have to show that if $|\Gcal(y_n)| > 0$, then
\begin{multline} \label{eq:goal-1}
	\P\left( \omega(\Phi_n(T)) > 0, \ell(\Phi_n(T)) = x, Y_k = y_k, 0 \leq k \leq n \right)\\
	= p \P(X=x) \P\left( Y_k = y_k, 0 \leq k \leq n \right).
\end{multline}

This property is quite intuitive: the history of $Y_k$ for $k \leq n$ does not give any information on the remaining number of white children of $\gamma(\Phi_n(T))$ in $\Phi_n(T)$, nor on the label on the edge between $\gamma(\Phi_n(T))$ and its first white child, if any. The fact that every node has a geometric number of offspring and that labels on the edges are i.i.d.\ should therefore imply~\eqref{eq:goal-1}. To formalize this intuition, we will prove the following result, from which one can readily deduce~\eqref{eq:goal-1}. For $t \in \Tcal$, let in the rest of the paper $\eta(v,t)$ be the number of children of the node $v \in \Vcal(t)$. Recall moreover that $\Rcal(t)$ stands for the set of red nodes of the tree $t \in \Tcal$.

\begin{proposition}
	If $|\Gcal(y_n)| > 0$ and $\P\left( Y_k = y_k, 0 \leq k \leq n \right) > 0$, then for any $t \in \Tcal_0$ we have
	\[ \Upsilon(\Phi_k(t)) = y_k, 0 \leq k \leq n \Longleftrightarrow y_n \subset t \text{ and } \eta(v, y_n) = \eta(v, t) \text{ for every } v \in \Rcal(y_n). \]
\end{proposition}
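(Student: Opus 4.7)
My plan is to prove the two implications separately: the forward one will follow directly from how $\Phi$ acts on trees, while the backward one will be an induction on $k$ built on the one-step recursion~\eqref{eq:upsilon-phi} established just above.

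For the forward direction, assume $\Upsilon(\Phi_k(t)) = y_k$ for every $0 \leq k \leq n$. Since $\Phi$ only recolors nodes without modifying $\Vcal(t)$, $\Ecal(t)$, or any edge label, the tree $\Phi_n(t)$ shares its full genealogical structure and edge labels with $t$, and $y_n = \Upsilon(\Phi_n(t))$ is obtained by deleting precisely the nodes that are white in $\Phi_n(t)$; this yields $y_n \subset t$. For the red-node condition, I would observe that a node $v$ can only turn red at a step $j$ where $\gamma(\Phi_{j-1}(t)) = v$ and $\omega(v, \Phi_{j-1}(t)) = 0$, i.e., every child of $v$ in $t$ is already non-white at that step; since colors never revert to white, these children remain non-white at step $n$, so they all appear in $y_n$, giving $\eta(v, y_n) = \eta(v, t)$.

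For the backward direction, assume $y_n \subset t$ and $\eta(v, y_n) = \eta(v, t)$ for every $v \in \Rcal(y_n)$, and prove $\Upsilon(\Phi_k(t)) = y_k$ by induction on $k$. The base case $k = 0$ follows from $t \in \Tcal_0$ together with the fact that $Y_0 = \Upsilon(T) = t_0$ almost surely, which combined with $\P(Y_0 = y_0) > 0$ forces $y_0 = t_0$. For the inductive step, the combination of the induction hypothesis with~\eqref{eq:upsilon-phi} reduces the analysis to computing $\omega(\Phi_k(t)) = \eta(v_k, t) - \eta(v_k, y_k)$, where $v_k = \gamma(y_k) = \gamma(\Phi_k(t))$. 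The positive-probability assumption forces $y_{k+1}$ to have one of three forms: $y_{k+1} = y_k$ (which occurs precisely when $|\Gcal(y_k)| = 0$, and the case is trivial); $y_{k+1} = \Omega'(y_k, x)$, in which the inclusion $y_{k+1} \subset y_n \subset t$ gives $\eta(v_k, t) \geq \eta(v_k, y_{k+1}) > \eta(v_k, y_k)$, so $\omega(\Phi_k(t)) > 0$ and $\Phi$ reveals a new white child; or $y_{k+1} = \Psi(y_k)$, in which $v_k$ turns red and cannot gain children afterwards (since $\Omega'$ only extends the current $\gamma$, which is necessarily green), so $v_k \in \Rcal(y_n)$ with $\eta(v_k, y_n) = \eta(v_k, y_k)$, and the hypothesis then forces $\eta(v_k, t) = \eta(v_k, y_k)$ and hence $\omega(\Phi_k(t)) = 0$.

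The main obstacle sits in the $\Omega'$ sub-case, where I must verify not only that some white child of $v_k$ exists in $\Phi_k(t)$, but that the specific child promoted to green by $\Phi$ matches the one prescribed by $\Omega'(y_k, x)$, in both its position within the child ordering and its edge label. This will rely on the lexicographic convention on nodes, via a secondary induction showing that the non-white children of $v_k$ in $\Phi_k(t)$ are precisely the first $\eta(v_k, y_k)$ children of $v_k$ in $t$; the next white child to be revealed is then uniquely determined, and its edge label must equal $x$ by identifying the unique edge of $y_{k+1} \setminus y_k$ inside $t$ through the inclusion $y_{k+1} \subset t$.
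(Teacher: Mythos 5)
Your proof is correct and follows essentially the same route as the paper's: both directions rest on the same ingredients --- the counting identity $\omega(\Phi_k(t)) = \eta(\gamma(y_k),t) - \eta(\gamma(y_k),y_k)$, the use of a reference tree $t'$ supplied by the positive-probability assumption to decide whether the step from $y_k$ to $y_{k+1}$ is an $\Omega'$-step or a $\Psi$-step, and the monotonicity $y_k \subset y_n$ and $\Rcal(y_k) \subset \Rcal(y_n)$ to transfer the hypotheses on $y_n$ down to every $k \leq n$. The only difference is organizational: you run a single forward induction on $k$, whereas the paper isolates a one-step equivalence indexed by the counter $\sigma(y) = |\Gcal(y)| + 2|\Rcal(y)| - 1$ (which equals $k$ along the trajectory) and proves it by induction on $\sigma(y)$; moreover your explicit ``initial segment'' argument identifying the promoted child and its edge label spells out a point the paper treats more briefly.
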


\begin{proof}
	For $t \in \Tcal$, let in the rest of the proof $\sigma(t) = |\Gcal(t)| + 2 |\Rcal(t)| - 1$, and recall that $\kappa(t) = \inf\{ n \geq 0: |\Gcal(\Phi_n(t))| = 0 \}$. It is clear from the definition of $\Phi$ that $\sigma(\Phi(t)) = \sigma(t) + \indicator{|\Gcal(t)| > 0}$. Since $\Phi_n(t) = t$ for any $n \geq 0$ if $|\Gcal(t)| = 0$ and $\sigma(t) = 0$ for $t \in \Tcal_0$, it follows that
	\begin{equation} \label{eq:sigma}
		\forall t \in \Tcal_0, \ \sigma(\Upsilon(\Phi_k(t))) = k \Longleftrightarrow k \leq \kappa(t).
	\end{equation}
	We break the proof of the proposition into two steps.
	\\

	\noindent \textit{First step.} Fix some $t \in \Tcal_0$ and $y \in \Tcal_\Upsilon$. The first step of the proof consists in proving that the following conditions are equivalent:
	\begin{enumerate}[(i)]
		\item \label{cond:i} $\Upsilon(\Phi_{\sigma(y)}(t)) = y$;
		\item \label{cond:ii} there exists $t' \in \Tcal_0$ such that $\Upsilon(\Phi_{\sigma(y)}(t')) = y$, $y \subset t$ and $\eta(v,y) = \eta(v, t)$ for every $v \in \Rcal(y)$.
	\end{enumerate}

	\noindent \textit{Proof of \eqref{cond:i}$\Rightarrow$\eqref{cond:ii}}. Assume that~\eqref{cond:i} holds, i.e., $\Upsilon(\Phi_{\sigma(y)}(t)) = y$: we want to prove~\eqref{cond:ii}. Then taking $t' = t$ gives the existence of the desired $t'$. Moreover, since $\Phi(a)$ does not change the genealogical structure of $a \in \Tcal$ and $\Upsilon(a)$ only truncates $a$, we have $\Upsilon(\Phi_n(t)) \subset t$ for any $n \geq 0$, in particular $y \subset t$. Then, consider any $v \in \Rcal(y)$. Since all the nodes of $t$ except for the root are white, the color of $v$ results from the successive applications of $\Phi$ to $t \in \Tcal_0$. In particular, $v$ being red in $\Phi_{\sigma(y)}(t)$ comes from the fact that at some point, none of the children of $v$ were white, i.e., $\omega(v, \Phi_k(t)) = 0$ for some $k \leq \sigma(y)$. Since $\Phi$ does not create white nodes, this implies $\omega(v, \Phi_{\sigma(y)}(t)) = 0$ and since $\Upsilon$ conserves all non-white nodes, $v$ has as many children in $\Phi_{\sigma(y)}(t)$ as in $\Upsilon(\Phi_{\sigma(y)}(t)) = y$, i.e., $\eta(v,\Phi_{\sigma(y)}(t)) = \eta(v,y)$ which gives $\eta(v,t) = \eta(v,y)$.
	\\

	\noindent \textit{Proof of \eqref{cond:ii}$\Rightarrow$\eqref{cond:i}}. Assume that~\eqref{cond:ii} holds: we want to prove~\eqref{cond:i}. So in the rest of the proof, consider some $t' \in \Tcal_0$ such that $\Upsilon(\Phi_{\sigma(y)}(t')) = y$, and assume that $y \subset t$ and $\eta(v,y) = \eta(v,t)$ for every $v \in \Rcal(y)$. We prove that $y = \Upsilon(\Phi_{\sigma(y)}(t))$ by induction on $\sigma(y)$.

	 If $\sigma(y) = 0$, then on the one hand, $\Upsilon(\Phi_{\sigma(y)}(t)) = t_0$ while on the other hand, $|\Gcal(y)| + 2 |\Rcal(y)| - 1 = 0$ implies $|\Gcal(y)| = 1$ and $|\Rcal(y)| = 0$, so that $y = t_0$. Thus $y = \Upsilon(\Phi_{\sigma(y)}(t))$ when $\sigma(y) = 0$, which initializes the induction.

	Assume now that $\sigma(y) \geq 1$. Then $\sigma(y) = \sigma(\Upsilon(\Phi_{\sigma(y)}(t'))$ and so~\eqref{eq:sigma} implies that $\sigma(y) \leq \kappa(t')$. Define $y' = \Upsilon(\Phi_{\sigma(y)-1}(t'))$: since $\sigma(y)-1 \leq \kappa(t')$,~\eqref{eq:sigma} implies that $\sigma(y') = \sigma(y)-1$, i.e., $\sigma(\Phi_{\sigma(y)}(t')) = 1 + \sigma(\Phi_{\sigma(y)-1}(t'))$. This last equality means that applying $\Phi$ on $\Phi_{\sigma(y)-1}(t')$ creates a green node or changes a green node into a red one, meaning in every case that $|\Gcal(\Phi_{\sigma(y)-1}(t'))| > 0$ and since $\Gcal(\Phi_{\sigma(y)-1}(t')) = \Gcal(\Upsilon(\Phi_{\sigma(y)-1}(t'))) = \Gcal(y')$ this finally means that $|\Gcal(y')| > 0$. In view of $y = \Upsilon(\Phi_{\sigma(y)}(t'))$, $y' = \Upsilon(\Phi_{\sigma(y)-1}(t'))$ and~\eqref{eq:upsilon-phi}, we therefore only have two possibilities:
	\begin{equation} \label{eq:y-y'}
		y = \begin{cases}
			\Omega'(y', \ell(\Phi_{\sigma(y)-1}(t'))) & \text{ if } \omega(\Phi_{\sigma(y)-1}(t')) > 0,\\
			\Psi(y') & \text{ if } \omega(\Phi_{\sigma(y)-1}(t')) = 0.
		\end{cases}
	\end{equation}

	In either case, we have $y' \subset y$ and since $y \subset t$ by assumption, this gives $y' \subset t$. Moreover, the action of $\Omega'$ is to add one green node to $\gamma$ and the action of $\Psi$ is to turn $\gamma$ into a red node, so that in either case we have $\Rcal(y') \subset \Rcal(y)$ and $\eta(v, y') = \eta(v, y)$ for every $v \in \Rcal(y')$. Since $\eta(v, y) = \eta(v, t)$ for every $v \in \Rcal(y)$ by assumption, this implies that $\eta(v,y') = \eta(v,t)$ for every $v \in \Rcal(y')$. Since finally $y' = \Upsilon(\Phi_{\sigma(y')}(t'))$ and $\sigma(y') = \sigma(y)-1 < \sigma(y)$, we can therefore invoke the induction hypothesis to deduce that $\Upsilon(\Phi_{\sigma(y')}(t)) = y' = \Upsilon(\Phi_{\sigma(y')}(t'))$. In particular, $\Phi_{\sigma(y')}(t)$ and $\Phi_{\sigma(y')}(t')$ have the same set of green and red nodes and since $|\Gcal(\Phi_{\sigma(y')}(t'))| > 0$ this shows that $|\Gcal(\Phi_{\sigma(y')}(t))| > 0$. Then,~\eqref{eq:upsilon-phi} shows that
	\[ \Upsilon(\Phi_{\sigma(y)}(t)) = \begin{cases}
		\Omega'(\Upsilon(\Phi_{\sigma(y')}(t)), \ell(\Phi_{\sigma(y')}(t))) & \text{if } \omega(\Phi_{\sigma(y')}(t)) > 0,\\
		\Psi(\Upsilon(\Phi_{\sigma(y')}(t))) & \text{if } \omega(\Phi_{\sigma(y')}(t)) = 0.
	\end{cases} \]
	
	Since $y' = \Upsilon(\Phi_{\sigma(y')}(t))$ this can be rewritten as
	\[ \Upsilon(\Phi_{\sigma(y)}(t)) = \begin{cases}
		\Omega'(y', \ell(\Phi_{\sigma(y')}(t))) & \text{if } \omega(\Phi_{\sigma(y')}(t)) > 0,\\
		\Psi(y') & \text{if } \omega(\Phi_{\sigma(y')}(t)) = 0,
	\end{cases} \]
	and in view of~\eqref{eq:y-y'}, the proof of $y = \Upsilon(\Phi_{\sigma(y)}(t))$ will be complete if we can show the two following implications:
	\[ \omega(\Phi_{\sigma(y')}(t')) > 0 \Rightarrow \omega(\Phi_{\sigma(y')}(t)) > 0 \text{ and } \ell(\Phi_{\sigma(y')}(t)) = \ell(\Phi_{\sigma(y')}(t')) \]
	and
	\[ \omega(\Phi_{\sigma(y')}(t')) = 0 \Rightarrow \omega(\Phi_{\sigma(y')}(t)) = 0. \]

	To prove these two implications, we will use the identities
	\begin{equation} \label{eq:w-1}
		\omega(\Phi_{\sigma(y')}(t')) = \eta(\gamma(y'), t') - \eta(\gamma(y'), y')
	\end{equation}
	and
	\begin{equation} \label{eq:w-2}
		\omega(\Phi_{\sigma(y')}(t)) = \eta(\gamma(y'), t) - \eta(\gamma(y'), y')
	\end{equation}
	that come from the fact that $\Upsilon(\Phi_{\sigma(y')}(t')) = \Upsilon(\Phi_{\sigma(y')}(t))$.

	\begin{description}
		\item[Direct implication] assume that $\omega(\Phi_{\sigma(y')}(t')) > 0$. In this case, applying $\Phi$ to $\Phi_{\sigma(y')}(t')$ adds a green child to $\gamma(y')$ in $\Phi_{\sigma(y)}(t')$, and so $\eta(\gamma(y'), y) = 1 + \eta(\gamma(y'), y')$. Since $y \subset t$ by assumption, this gives $\eta(\gamma(y'), t) > \eta(\gamma(y'), y')$ which proves that $\omega(\Phi_{\sigma(y')}(t)) > 0$ in view of~\eqref{eq:w-1}. Moreover, the equality $\eta(\gamma(y'), \Upsilon(\Phi_{\sigma(y')}(t'))) = \eta(\gamma(y'), \Upsilon(\Phi_{\sigma(y')}(t)))$ means that $\gamma(y')$ has as many green and red children in $\Phi_{\sigma(y')}(t')$ than in $\Phi_{\sigma(y')}(t)$. In particular, applying $\Phi$ to these two trees adds the same node, say $v$, to each tree. Then, the equality $\ell(\Phi_{\sigma(y')}(t)) = \ell(\Phi_{\sigma(y')}(t'))$ comes from the fact that the label on the edge between $\gamma(y')$ and $v$ is the same in $t$ and $t'$ due to the inclusion $y = \Upsilon(\Phi_{\sigma(y)}(t')) \subset t$;
		\item[Reverse implication] assume that $\omega(\Phi_{\sigma(y')}(t')) = 0$. In this case, applying $\Phi$ to $\Phi_{\sigma(y')}(t')$ turns $\gamma(y')$ into a red node in $\Phi_{\sigma(y)}(t')$, i.e., $\gamma(y') \in \Rcal(y)$ and in particular, $\eta(\gamma(y'), y) = \eta(\gamma(y'), t)$ by assumption on $y$. On the other hand, $\Phi$ did not change the genealogical structure of $y'$ and so $\eta(\gamma(y'), y) = \eta(\gamma(y'), y')$ which shows that $\eta(\gamma(y'),y') = \eta(\gamma(y'), t)$. This implies $\omega(\Phi_{\sigma(y')}(t)) = 0$ in view of~\eqref{eq:w-2}.
	\end{description}
	This concludes the proof of the first step.
	\\

	\noindent \textit{Second step.} We now prove the proposition. Assume in the rest of the proof that $|\Gcal(y_n)| > 0$ and $\P\left( Y_k = y_k, 0 \leq k \leq n \right) > 0$. The direct implication is rather straightforward: if $\Upsilon(\Phi_k(t)) = y_k$ for every $k = 0, \ldots, n$, then for $k = n$ the result of the first step implies that $y_n \subset t$ and $\eta(v, y_n) = \eta(v, t)$ for every $v \in \Rcal(y_n)$.
		\\

		Let us now prove the converse implication, so assume that $y_n \subset t$ and $\eta(v, y_n) = \eta(v, t)$ for every $v \in \Rcal(y_n)$. The goal is to prove that $\Upsilon(\Phi_k(t)) = y_k$ for every $k = 0, \ldots, n$. Since $\P\left(Y_k = y_k, 0 \leq k \leq n \right) > 0$, there exists $t' \in \Tcal_0$ such that $y_k = \Upsilon(\Phi_k(t'))$ for every $k = 0, \ldots, n$. Because $\Phi$ does not erase nodes and never changes the color of a red node, we have $y_k \subset y_{k+1}$ and $\Rcal(y_k) \subset \Rcal(y_{k+1})$ for any $0 \leq k \leq n-1$. In particular, for every $0 \leq k \leq n$ it holds that $y_k \subset t$ and $\eta(v, y_k) = \eta(v, t)$ for every $v \in \Rcal(y_k)$. Moreover, $\Phi$ cannot create green nodes starting from a tree with no green node, and so the condition $|\Gcal(y_n)| > 0$ implies that $|\Gcal(y_k)| > 0$ for every $k \leq n$. Thus $|\Gcal(\Phi_k(t'))| > 0$ and so $k < \kappa(t)$, so that~\eqref{eq:sigma} implies that $\sigma(y_k) = k$, i.e., $y_k = \Upsilon(\Phi_{\sigma(y_k)}(t'))$. Thus all the assumptions of the first step are satisfied, and we deduce that $y_k = \Upsilon(\Phi_{\sigma(y_k)}(t)) = \Upsilon(\Phi_k(t))$ for every $k \leq n$. This finally concludes the proof of the proposition.
\end{proof}

\subsection{Proof of Theorem~\ref{thm:coupling}}

By regeneration of $(B_n)$ at $\tau+1$ and in view of~\eqref{eq:transition}, it is enough to show that $(\Gamma(\Phi_n(T)), n \geq 0)$ is the Markov chain started at $\delta_0$ with the following transition: for any $\beta \in \Bcal$ and any measurable function $f: \Bcal \to [0,\infty)$,
\begin{multline*}
	\E\left[f\big(\Gamma(\Phi_{n+1}(T))\big) \mid \Gamma(\Phi_n(T)) = \beta \right] = f(\delta_0) \indicator{|\beta| = 0} + p \E \left[ f(\beta + \delta_{\pi(\beta) + X}) \right] \indicator{|\beta| > 0}\\
	+ (1-p) f(\beta - \delta_{\pi(\beta)}) \indicator{|\beta| > 0}.
\end{multline*}

Since $\Gamma$ only depends on the green nodes, we have $\Gamma(\Phi_n(T)) = \Gamma(Y_n)$ and so we only have to show that $\Gamma(Y_n)$ is the Markov chain with the prescribed dynamic. This is easily verified once one realizes that the $\sigma$-algebras $\sigma(Y_k, 0 \leq k \leq n)$ and $\sigma(\Gamma(Y_k), 0 \leq k \leq n)$ are the same. Indeed, the inclusion $\sigma(\Gamma(Y_k), 0 \leq k \leq n) \subset \sigma(Y_k, 0 \leq k \leq n)$ is trivial. For the reverse inclusion, note that from the sequence $(\Gamma(Y_k), 0 \leq k \leq n)$ one can recover the sequence $(Y_k, 0 \leq k \leq n)$. This can be proved by induction on $n$. For $n = 0$ this is trivial, since $Y_0 = t_0$ and $\Gamma(Y_0) = \delta_0$. So assume it holds for $n \geq 0$, and let us prove it for $n+1$. So assume that $(\Gamma(Y_k), 0 \leq k \leq n+1)$ is known. Then by induction hypothesis, $(Y_k, 0 \leq k \leq n)$ is known. Moreover,~\eqref{eq:upsilon-phi} shows that there are only three possible cases:
\begin{itemize}
	\item either $Y_{n+1} = Y_n$, in which case $\Gamma(Y_{n+1}) = \Gamma(Y_n)$;
	\item or $Y_{n+1} = \Omega'(Y_n, \lambda)$ for some $\lambda \in \R$: in this case, $\Gamma(Y_{n+1}) = \Gamma(Y_n) + \delta_{\lambda}$;
	\item or $Y_{n+1} = \Psi(Y_n)$, in which case $\Gamma(Y_{n+1}) = \Gamma(Y_n) - \delta_\lambda$ for some $\lambda \in \R$.
\end{itemize}

Thus by comparing $\Gamma(Y_{n+1})$ to $\Gamma(Y_n)$ one can recover $Y_{n+1}$ which shows that $\sigma(\Gamma(Y_k), 0 \leq k \leq n) \subset \sigma(Y_k, 0 \leq k \leq n)$. From this, one easily deduces using the Markov property of $(Y_n)$ that $(\Gamma(Y_n))$ is a Markov chain with the prescribed dynamic. The proof of Theorem~\ref{thm:coupling} is therefore complete.

\bibliographystyle{apt}

\end{document}